\newtheorem{theorem}{Theorem}[section]
\newtheorem{lemma}[theorem]{Lemma}
\newtheorem{proposition}[theorem]{Proposition}
\newtheorem{corollary}[theorem]{Corollary}
\theoremstyle{definition}
\newtheorem{definition}[theorem]{Definition}
\DeclareMathOperator{\Ext}{Ext}
\DeclareMathOperator{\Hom}{Hom}
\DeclareMathOperator{\im}{Im}
\newcommand{\cat}[1]{\mathcal{#1}}           %% font for categories
\newcommand{\tensor}{\otimes}
\newcommand{\class}[1]{\mathcal{#1}}   %% font for classes
\newcommand{\Z}{\mathbb{Z}}
\newcommand{\Q}{\mathbb{Q/Z}}
\newcommand{\ch}{\textnormal{Ch}(R)}
\newcommand{\tilclass}[1]{\widetilde{\class{#1}}}
\newcommand{\rightperp}[1]{#1^{\perp}}
\newcommand{\leftperp}[1]{{}^\perp #1}
\begin{document}

\title{The flat stable module category of a coherent ring}

\author{James Gillespie}
\address{Ramapo College of New Jersey \\
         School of Theoretical and Applied Science \\
         505 Ramapo Valley Road \\
         Mahwah, NJ 07430}
\email[Jim Gillespie]{jgillesp@ramapo.edu}
\urladdr{http://pages.ramapo.edu/~jgillesp/}
\thanks{The author wishes to express thanks to the referee for giving his or her valuable feedback.}

\keywords{abelian model structure, Gorenstein flat, stable module category}
\date{\today}
%%\subjclass[2010]{55U35, 18G25, 18E35}

\begin{abstract}%% what do you actually prove?%%
Let $R$ by a right coherent ring and $R$-Mod denote the category of left $R$-modules. We show that there is an abelian model structure on $R$-Mod whose cofibrant objects are precisely the Gorenstein flat modules. Employing a new method for constructing model structures, the key step is to show that a module is flat and cotorsion if and only if it is Gorenstein flat and Gorenstein cotorsion.
\end{abstract}

\maketitle
 
\section{introduction}

Recall that a model structure on a category is a formal (categorical) way of introducing a homotopy theory on that category. If the category we start with is abelian then the resulting homotopy theory captures some variety of homological algebra, or relative homological algebra. In fact,  the homotopy category of a hereditary abelian model category is known to be an algebraic triangulated category. So a good way to both construct and model an algebraic triangulated category is to construct a hereditary abelian model structure. For a recent survey, see~\cite{gillespie-hereditary-abelian-models}.

%%In fact, the homotopy category of a hereditary abelian model category is known to be an algebraic triangulated category. So a good way to construct an algebraic triangulated category is to construct a hereditary abelian model structure, for then the model structure also provides a framework to study the triangulated homotopy category. 

The first purpose of this paper is to construct a new hereditary abelian model category structure, the \textbf{Gorenstein flat model structure}, on the category of left $R$-modules where $R$ is any right coherent ring. The cofibrant objects in this model structure are precisely the Gorenstein flat modules which were introduced and studied by Enochs and several coauthors and subsequently studied by many other authors. In particular, see~\cite{enochs-Xu-Goren flat covers, enochs-jenda-book, enochs-Goren flat covers, holm-Goren, Yang-Liang-Goren flat precovers}, but there is certainly more literature on the subject. A second purpose of this paper is to illustrate a new method from~\cite{gillespie-hovey triples} for constructing abelian model structures. This powerful method allows us to realize the existence of hereditary abelian model structures even before understanding the class of trivial objects. This seems strange; yet with this approach, we still obtain at once that the full subcategory of cofibrant-fibrant objects will form a Frobenius category whose stable category is canonically equivalent to the homotopy category of the constructed model structure. In the current case of the Gorenstein flat model structure, this full subcategory of cofibrant-fibrant objects is exactly the class of all left $R$-modules that are both Gorenstein flat and cotorsion. This full subcategory becomes a Frobenius category whose conflations, or short exact sequences, are the usual short exact sequences whose all three terms are Gorenstein flat and cotorsion. The injective-projective objects turn out to be the flat cotorsion modules. So the homotopy category of the Gorenstein flat model structure is triangle equivalent to the stable category of this Frobenius category (see Corollary~\ref{cor-Frobenius}). This generalizes the well known construction of the stable module category of a quasi-Frobenius ring $R$. Indeed when $R$ is quasi-Frobenius, it turns out that every $R$-module is both Gorenstein flat and cotorsion, recovering the standard fact that $R$-Mod is a Frobenius category in this case. Moreover, an $R$-module is flat if and only if it is injective if and only if it is projective in this case. It follows that the above construction recovers  with the usual stable module category of a quasi-Frobenius ring $R$.

Over the years different authors have given various generalizations of the stable module category of a quasi-Frobenius ring. Using the abelian model category approach, Hovey extended the notion to all Gorenstein rings in~\cite{hovey-cotorsion}, and the current author generalized this to the Ding-Chen rings in~\cite{gillespie-ding}. We show in Corollary~\ref{corollary} that the Gorenstein flat model structure agrees with these constructions. In fact, it was recently shown in~\cite{estrada-gillespie-coherent schemes} that the Gorenstein flat model structure recovers what was called the ``projective stable module category of $R$'' in the recent~\cite{bravo-gillespie-hovey}. We comment more on this in the last paragraph of the paper.

%This powerful method works well to construct model structures in which the class $\class{W}$ of trivial objects is not well understood, which is the case for the Gorenstein flat model structure.  As we show at the end of the paper, $\class{W}$ is well understood when $R$ is quasi-Frobenius, or more generally Gorenstein, or more generally a Ding-Chen ring. In this case, $\class{W}$ is precisely the class of modules with finite flat dimension and the associated homotopy category is triangle equivalent to the stable module category Stmod($R$).

%We will see that the full subcategory of left $R$-modules consisting of the modules that are both Gorenstein flat and cotorsion is a Frobenius category. The conflations, or short exact sequences, are the usual short exact sequences whose all three terms are Gorenstein flat and cotorsion. The injective-projective objects turn out to the be flat cotorsion modules. The homotopy category of the Gorenstein flat model structure is triangle equivalent to the stable module category of this Frobenius category.

This paper is brief for a few reasons. One reason is that the method of~\cite{gillespie-hovey triples} used to construct the model structure is doing much of the work. We explain and give a precise statement of this method in the next section. When pairing this theorem with Hovey's one-to-one correspondence between cotorsion pairs and abelian model structures the problem reduces to showing that the flat modules and Gorenstein flat modules are each the left half of a complete hereditary cotorsion pair. But this is already known from the work of Enochs and coauthors. So the essential step is to show that the \emph{cores} of these cotorsion pairs coincide. All of the terminology in this paragraph will now be explained in the next section.

\section{preliminaries}\label{sec-prelims}

We let $R$ be a ring and $R$-Mod denote the category of left $R$-modules. We assume the ring has an identity and that the modules are unital.

\subsection{Cotorsion pairs and Hovey triples} For a class $\class{S}$ of $R$-modules, we let $\rightperp{\class{S}}$ denote the class of all modules $N$ such that $\Ext_R^1(S,N) = 0$ for all $S \in \class{S}$. On the other hand, $\leftperp{\class{S}}$ denotes the class of all modules $M$ such that $\Ext_R^1(M,S) = 0$ for all $S \in \class{S}$. By a \textbf{cotorsion pair} we mean a pair of classes of $R$-modules $(\class{A},\class{B})$ such that $\class{B} = \rightperp{\class{A}}$ and $\class{A} = \leftperp{\class{B}}$. We call the cotorsion pair \textbf{hereditary} if the left class $\class{A}$ is \emph{resolving} in the sense that whenever
$$ 0 \xrightarrow{} A' \xrightarrow{} A \xrightarrow{} A'' \xrightarrow{} 0 $$ is a short exact sequence with $A,A'' \in \class{A}$, then $A' \in \class{A}$ too. Since $R$-Mod has enough projectives and injectives this condition has been shown to be equivalent to the dual statement that $\class{B}$ is \emph{coresolving}. See~\cite{garcia-rozas}.

A cotorsion pair is called \textbf{complete} if it has enough injectives and enough projectives. This means that for each $R$-module $M$ there exist short exact sequences $0 \xrightarrow{} B \xrightarrow{} A \xrightarrow{} M \xrightarrow{} 0$ and $0 \xrightarrow{} M \xrightarrow{} B' \xrightarrow{} A' \xrightarrow{} 0$ with $A,A' \in \class{A}$ and $B,B' \in \class{B}$.
 The books~\cite{enochs-jenda-book} and~\cite{trlifaj-book} are standard references for cotorsion pairs.

By the main theorem of~\cite{hovey-cotorsion} we know that an abelian model structure on $R$-Mod, in fact on any abelian category, is equivalent to a triple $(\class{Q},\class{W},\class{R})$ of classes of objects for which $\class{W}$ is thick and $(\class{Q} \cap \class{W},\class{R})$ and $(\class{Q},\class{W} \cap \class{R})$ are each complete cotorsion pairs. By \textbf{thick} we mean that the class $\class{W}$ is closed under retracts (i.e., direct summands) and satisfies that whenever two out of three terms in a short exact sequence are in $\class{W}$ then so is the third. In this case, $\class{Q}$ is precisely the class of cofibrant objects of the model structure, $\class{R}$ are precisely the fibrant objects, and $\class{W}$ is the class of trivial objects. We hence denote an abelian model structure $\class{M}$ as a triple $\class{M} = (\class{Q},\class{W},\class{R})$ and for short
we will denote the two associated cotorsion pairs above by $(\tilclass{Q},\class{R})$ and $(\class{Q},\tilclass{R})$ where $\tilclass{Q} = \class{Q} \cap \class{W}$ is the class of trivially cofibrant objects and $\tilclass{R} = \class{W} \cap \class{R}$ is the class of trivially fibrant objects. We say that $\class{M}$ is \textbf{hereditary} if both of these associated cotorsion pairs are hereditary. We will also call any abelian model structure $\class{M} = (\class{Q},\class{W},\class{R})$ a \textbf{Hovey triple}.
Besides~\cite{hovey-cotorsion}, the book~\cite{hovey-model} is a standard reference for the theory of model categories.

By the \textbf{core} of an abelian model structure $\class{M} = (\class{Q},\class{W},\class{R})$ we mean the class $\class{Q} \cap \class{W} \cap \class{R}$. This notion comes up in the following theorem giving a sort of converse to Hovey's main theorem in the case that we have hereditary cotorsion pairs.

\begin{theorem}[How to construct a Hovey triple from two cotorsion pairs]\label{them-how to construct hovey triples}
Let $(\class{Q}, \tilclass{R})$ and $(\tilclass{Q}, \class{R})$ be two complete hereditary cotorsion pairs in an abelian category $\cat{C}$ satisfying the two conditions below.
\begin{enumerate}
\item $\tilclass{R} \subseteq \class{R}$ and $\tilclass{Q} \subseteq \class{Q}$.
\item $\tilclass{Q} \cap \class{R} = \class{Q} \cap \tilclass{R}$.
\end{enumerate}
Then $(\class{Q},\class{W},\class{R})$ is a Hovey triple where the thick class $\class{W}$ can be described in the two following ways:
\begin{align*}
   \class{W}  &= \{\, X \in \class{C} \, | \, \exists \, \text{a short exact sequence } \, X \rightarrowtail R \twoheadrightarrow Q \, \text{ with} \, R \in \tilclass{R} \, , Q \in \tilclass{Q} \,\} \\
           &= \{\, X \in \class{C} \, | \, \exists \, \text{a short exact sequence } \, R' \rightarrowtail Q' \twoheadrightarrow X \, \text{ with} \, R' \in \tilclass{R} \, , Q' \in \tilclass{Q} \,\}.
          \end{align*}
Moreover, $\class{W}$ is unique in the sense that if $\class{V}$ is another thick class for which $(\class{Q},\class{V},\class{R})$ is a Hovey triple, then necessarily $\class{V} = \class{W}$.
\end{theorem}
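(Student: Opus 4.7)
My plan is to define $\class{W}$ by the first formula,
\[
\class{W}=\{\,X\mid\exists\text{ SES } X\rightarrowtail R\twoheadrightarrow Q \text{ with } R\in\tilclass{R},\,Q\in\tilclass{Q}\,\},
\]
and to verify the three conditions of Hovey's correspondence: that $\class{W}$ is thick, that $(\class{Q}\cap\class{W},\class{R})=(\tilclass{Q},\class{R})$, and that $(\class{Q},\class{W}\cap\class{R})=(\class{Q},\tilclass{R})$. Since the target pairs are complete hereditary cotorsion pairs by hypothesis, it suffices to establish the intersection identities $\class{Q}\cap\class{W}=\tilclass{Q}$ and $\class{W}\cap\class{R}=\tilclass{R}$ together with thickness of $\class{W}$. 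Along the way the second description must be shown to yield the same class, as both descriptions will be needed.

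For $\tilclass{Q}\subseteq\class{W}$: given $X\in\tilclass{Q}$, I would apply enough injectives for the cotorsion pair $(\tilclass{Q},\class{R})$ to obtain $0\to X\to A\to B\to 0$ with $A\in\class{R}$ and $B\in\tilclass{Q}$. Since $X,B\in\tilclass{Q}$ and $\tilclass{Q}$ is closed under extensions, $A\in\tilclass{Q}\cap\class{R}$, which by~(2) equals $\class{Q}\cap\tilclass{R}$, so $A\in\tilclass{R}$; this sequence witnesses $X\in\class{W}$. Conversely, given $X\in\class{Q}\cap\class{W}$ with witness $0\to X\to R\to Q\to 0$, extension-closure of $\class{Q}$ gives $R\in\class{Q}$, hence $R\in\class{Q}\cap\tilclass{R}=\tilclass{Q}\cap\class{R}$ by~(2), so $R\in\tilclass{Q}$; the resolving property of $\tilclass{Q}$ applied to the same sequence then yields $X\in\tilclass{Q}$. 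The identity $\class{W}\cap\class{R}=\tilclass{R}$ is the mirror argument using the second description together with extension-closure of $\class{R}$ and the coresolving property of $\tilclass{R}$.

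The technical heart of the proof lies in establishing the equivalence of the two descriptions and the thickness of $\class{W}$. For the equivalence, given $X$ in the first class with witness $0\to X\to R\to Q\to 0$, I would apply enough projectives of $(\tilclass{Q},\class{R})$ to $Q$ to get $0\to K\to P\to Q\to 0$ with $K\in\class{R}$, $P\in\tilclass{Q}$; since $P,Q\in\tilclass{Q}$ and $\tilclass{Q}$ is resolving, $K\in\tilclass{Q}\cap\class{R}=\class{Q}\cap\tilclass{R}$ by~(2), so $K\in\tilclass{R}$. Forming the pullback of $P\to Q$ along $R\to Q$ then produces a middle object lying in $\tilclass{R}$ (as an extension of $R\in\tilclass{R}$ by $K\in\tilclass{R}$), and a further resolution combined with Ext-vanishings $\Ext^1(\tilclass{Q},\class{R})=0$ and condition~(2) converts this into a second-description witness for $X$. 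Closure of $\class{W}$ under retracts is immediate since $\tilclass{R}$ and $\tilclass{Q}$ are each summand-closed. For the two-out-of-three property on $0\to X'\to X\to X''\to 0$, I would proceed by case analysis: take witness sequences for whichever two of $X',X,X''$ are in $\class{W}$ (using whichever description is convenient), form a 3-by-3, pullback, or pushout diagram, and produce a witness for the third term whose outer entries land in $\tilclass{R}$ and $\tilclass{Q}$ by the same combination of Ext-vanishing, extension/resolving closures, and condition~(2). This diagrammatic case analysis is the main obstacle, as each of the three sub-cases requires a somewhat different construction.

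Finally, for uniqueness, suppose $(\class{Q},\class{V},\class{R})$ is any other Hovey triple. By Hovey's correspondence the associated cotorsion pairs must coincide with $(\tilclass{Q},\class{R})$ and $(\class{Q},\tilclass{R})$, forcing $\class{Q}\cap\class{V}=\tilclass{Q}$ and $\class{V}\cap\class{R}=\tilclass{R}$. Any $X\in\class{V}$ admits an SES $0\to X\to R\to Q\to 0$ with $R\in\tilclass{R}\subseteq\class{V}$ and $Q\in\class{Q}$ via enough injectives of $(\class{Q},\tilclass{R})$; thickness of $\class{V}$ then forces $Q\in\class{Q}\cap\class{V}=\tilclass{Q}$, so $X\in\class{W}$. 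Conversely, any $X\in\class{W}$ has a witness with the non-$X$ terms in $\tilclass{R}\cup\tilclass{Q}\subseteq\class{V}$, and thickness of $\class{V}$ then places $X\in\class{V}$. Hence $\class{V}=\class{W}$.
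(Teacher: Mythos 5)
This theorem is not proved in the paper itself; it is quoted from the reference \cite{gillespie-hovey triples}, which is where the actual proof lives. So there is no in-paper argument to compare against. That said, your outline follows the same strategy that the cited paper uses: define $\class{W}$ by one of the two formulas, show the two formulas agree, verify that $\class{W}$ is thick, and compute $\class{Q}\cap\class{W}=\tilclass{Q}$ and $\class{W}\cap\class{R}=\tilclass{R}$, so that the two hypothesized cotorsion pairs are literally the cotorsion pairs of the Hovey triple. Your computations of $\class{Q}\cap\class{W}=\tilclass{Q}$ and of the uniqueness of $\class{W}$ are correct as written, and the mirror argument for $\class{W}\cap\class{R}=\tilclass{R}$ does work once the two descriptions are known to agree.

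The one place I would push back is that you have identified where the real work lies but have not actually done it. Your sketch of the equivalence of the two descriptions takes an unnecessary detour: rather than resolving $Q$ and then pulling back, it is cleaner to apply enough projectives of $(\class{Q},\tilclass{R})$ directly to $R$, getting $0\to L\to M\to R\to 0$ with $L\in\tilclass{R}$, $M\in\class{Q}$; then $M\in\tilclass{R}$ by extension-closure, hence $M\in\class{Q}\cap\tilclass{R}=\tilclass{Q}\cap\class{R}\subseteq\tilclass{Q}$ by condition~(2), and the pullback of $M\twoheadrightarrow R$ along $X\rightarrowtail R$ gives $0\to L\to N\to X\to 0$ with $N\in\tilclass{Q}$ (resolving, since $N$ sits in $0\to N\to M\to Q\to 0$) and $L\in\tilclass{R}$, a second-description witness in one step. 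As for thickness, writing ``form a 3-by-3, pullback, or pushout diagram'' is a plan rather than a proof. Each of the three two-out-of-three cases needs a concrete construction, and they are not symmetric; for instance, in the case $X',X''\in\class{W}\Rightarrow X\in\class{W}$ one pushes $X$ out along the witness $X'\rightarrowtail R'$, then pushes the resulting extension out again along $X''\rightarrowtail R''$, and finally composes witnesses using the resolving/coresolving closures. The other two cases require different diagrams, and these verifications constitute the bulk of the argument in \cite{gillespie-hovey triples}. So your proposal is a correct and well-aimed outline, but it postpones precisely the step that is the genuine content of the theorem.
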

Theorem~\ref{them-how to construct hovey triples} just appeared in~\cite{gillespie-hovey triples}. It provides a powerful method for constructing hereditary abelian model structures and is the main tool used in this paper.

\subsection{Gorenstein flat modules}\label{subsec-G-flat} We now recall the key definitions.

\begin{definition}
By a \textbf{complete flat resolution} we mean an exact chain complex $F$ of flat left $R$-modules
$$F = \cdots \xrightarrow{}  F_1  \xrightarrow{} F_0   \xrightarrow{}  F_{-1} \xrightarrow{} F_{-2}    \xrightarrow{} \cdots $$ for which $I \otimes_R F$ is also exact whenever $I$ is an injective right $R$-module.
\end{definition}

\begin{definition}
A left $R$-module $M$ is called \textbf{Gorenstein flat} if $M = Z_{-1}F$ (that is, $\ker{(F_{-1} \xrightarrow{} F_{-2})})$ for some complete flat resolution $F$.
We let $\class{GF}$ denote the class of all Gorenstein flat modules and $\class{GC} = \rightperp{\class{GF}}$. We call the modules in $\class{GC}$ \textbf{Gorenstein cotorsion}.
\end{definition}

It is easy to see that any flat module is Gorenstein flat and consequently any Gorenstein cotorsion module is cotorsion.

It is well known that $(\class{F},\class{C})$, where $\class{F}$ is the class of flat modules and $\class{C} = \rightperp{\class{F}}$ is the class of cotorsion modules, is a complete hereditary cotorsion pair. This was shown in~\cite{enochs-flat-cover-theorem}, and is also proved in~\cite{enochs-jenda-book}.

\begin{definition}
Call an $R$-module $N$ \textbf{flat cotorsion} if it is both flat and cotorsion. That is, if it belongs to the \emph{core} $\class{F} \cap \class{C}$ of the flat cotorsion pair $(\class{F},\class{C})$.
\end{definition}

Holm shows in~\cite{holm-Goren} that for a right coherent ring $R$ the class $\class{GF}$ is projectively resolving. This statement is encompassed in the following important theorem.

\begin{theorem}[Enochs, Jenda, Lopez-Ramos~\cite{enochs-Goren flat covers}]\label{them-G-flat covers}
Let $R$ be a right coherent ring and $\class{GF}$ the class of Gorenstein flat left $R$-modules. Then $(\class{GF},\class{GC})$ is a complete hereditary cotorsion pair.
\end{theorem}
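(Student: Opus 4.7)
The plan is to establish the result via the standard cotorsion-pair-generated-by-a-set machinery (Eklof--Trlifaj), combined with a deconstructibility property of $\class{GF}$ that one derives from the coherent hypothesis. The overall structure has three pieces: (i) verify closure properties of $\class{GF}$, (ii) show the pair is complete, and (iii) show it is hereditary.

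First I would record the basic closure properties of $\class{GF}$ over a right coherent ring. The class contains all flat modules, is closed under extensions, under arbitrary direct sums, and under direct limits. The direct limit closure uses coherence crucially: if $R$ is right coherent then the class of injective right $R$-modules is closed under direct sums, so tensoring a filtered colimit of complete flat resolutions by an injective right module preserves exactness. I would also note that by Holm's theorem (cited in the paragraph just before the statement) $\class{GF}$ is projectively resolving, which in particular gives closure under kernels of epimorphisms between Gorenstein flats; this takes care of the hereditary claim once the pair is known to be a cotorsion pair.

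The main work is completeness. The approach I would use is to produce a set $\class{S} \subseteq \class{GF}$ such that every $M \in \class{GF}$ is a transfinite extension (continuous filtration) of modules from $\class{S}$. Concretely, I would fix an infinite cardinal $\kappa \geq |R| + \aleph_0$ and show that for every Gorenstein flat module $M$ and every subset $X \subseteq M$ of cardinality at most $\kappa$, there is a Gorenstein flat submodule $N \leq M$ with $X \subseteq N$, $|N| \leq \kappa$, and such that $M/N$ is again Gorenstein flat. This Kaplansky-type statement would be proved by a standard back-and-forth construction: starting from $X$, one alternately enlarges a submodule to land in $\class{GF}$ (by extracting a suitable part of a fixed complete flat resolution of $M$, using coherence to keep things of size $\le \kappa$) and enforces that the quotient is flat-approximated correctly, iterating $\omega$ times. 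Iterating this statement transfinitely yields the desired filtration of each $M \in \class{GF}$ by a set of representatives $\class{S}$ of Gorenstein flats of size $\le \kappa$. By the Eklof lemma, $\leftperp{(\rightperp{\class{S}})}$ is closed under transfinite extensions, hence equals $\class{GF}$, and by Eklof--Trlifaj the cotorsion pair $(\leftperp{(\rightperp{\class{S}})}, \rightperp{\class{S}}) = (\class{GF}, \class{GC})$ is complete.

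The main obstacle is exactly the Kaplansky/deconstructibility step: guaranteeing that the small submodule one builds is itself Gorenstein flat and that the quotient remains Gorenstein flat. This is where right coherence of $R$ is used most essentially, since one needs to keep the ``test'' of exactness against injective right modules $I \otimes_R (-)$ under control when passing to a sub-resolution of bounded size; coherence ensures that suitable classes of injective right modules behave well enough under pure-injective-envelope type arguments. Once this step is in place, hereditariness is immediate from Holm's projectively resolving result, and the theorem follows.
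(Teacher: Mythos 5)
The paper states this as a quoted result of Enochs, Jenda, and Lopez-Ramos (their paper ``The existence of Gorenstein flat covers'') and gives no proof of its own, so there is no internal argument to compare against. Your outline follows the standard modern template — deconstruct $\class{GF}$ into a set of $\le \kappa$-sized modules and invoke the Eklof--Trlifaj theorem — and that scaffolding is sound in spirit. The cited source, however, actually reaches the conclusion by showing $\class{GF}$ is a covering Kaplansky class closed under direct limits and extensions, obtaining special precovers via Wakamatsu's lemma and special preenvelopes via Salce's lemma; both routes turn on the same deconstruction lemma, so the difference is largely one of packaging.

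Two concrete problems in the write-up. First, the justification offered for closure of $\class{GF}$ under direct limits is wrong: right coherence does \emph{not} give that injective right $R$-modules are closed under direct sums — that is the Bass--Papp characterization of right \emph{Noetherian} rings. What coherence actually provides is closure of the FP-injective (absolutely pure) right modules under direct limits, and the argument needs to be rephrased in those terms; and even then, ``tensoring a filtered colimit of complete flat resolutions by an injective'' presupposes a compatible filtered system of complete flat resolutions, which a filtered system of Gorenstein flat modules does not automatically supply. Second, and more seriously, the Kaplansky-type step — given $M \in \class{GF}$ and a subset of size $\le \kappa$, produce a $\le \kappa$-sized Gorenstein flat submodule $N \le M$ containing it with $M/N$ again Gorenstein flat — is essentially the entire technical content of the theorem, and your sketch (``extracting a suitable part of a fixed complete flat resolution, using coherence to keep things of size $\le \kappa$'') does not explain how one guarantees that both $N$ and $M/N$ inherit exactness of the resolution after applying $I \otimes_R -$. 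As it stands, the proposal identifies the correct machinery but leaves the load-bearing step unjustified.
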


\section{The Gorenstein flat model structure}\label{sec-model struc}

Let $R$ be a right coherent ring and $R$-Mod the category of left $R$-modules. Our goal is to apply Theorem~\ref{them-how to construct hovey triples} to obtain a hereditary abelian model structure on $R$-Mod whose homotopy category is a generalization of the stable module category, Stmod($R$), in the case that $R$ is Gorenstein. We let $(\class{F},\class{C})$ and $(\class{GF},\class{GC})$ denote, respectively, the flat cotorsion pair and the Gorenstein flat cotorsion pair from Section~\ref{subsec-G-flat}.
Note that to prove existence of the model structure we only need to show that the two cotorsion pairs have the same core. This is accomplished by the following lemma and proposition. 

%We also wish to have the class $\class{GF}$ of Gorenstein flat modules as the cofibrant objects.  The stable module category of a Gorenstein ring was first introduced by Hovey in~\cite{hovey-cotorsion}, and extended to Ding-Chen rings in~\cite{gillespie-ding}.

\begin{lemma}\label{lemma-Gorenstein cotorsion modules}
The following are equivalent for an $R$-module $N$.
\begin{enumerate}
\item $N$ is Gorenstein cotorsion.
\item $N$ is cotorsion and the complex $\Hom_R(F,N)$ is exact for all complete flat resolutions $F$.
\end{enumerate}
\end{lemma}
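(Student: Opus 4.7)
I plan to prove the equivalence by handling each implication separately, relying on the fact that every cycle $Z_iF$ of a complete flat resolution $F$ is itself Gorenstein flat (apply a shift) together with the fact that every flat module is Gorenstein flat.

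For $(1) \Rightarrow (2)$, I would first observe that since $\class{F} \subseteq \class{GF}$, we have $\class{GC} \subseteq \class{C}$, so $N$ is in particular cotorsion. To obtain exactness of $\Hom_R(F,N)$, I would split the complete flat resolution $F$ into the short exact sequences $0 \to Z_{i+1}F \to F_{i+1} \to Z_i F \to 0$. Each $Z_iF$ is Gorenstein flat, hence $\Ext^1_R(Z_iF, N) = 0$, so applying $\Hom_R(-, N)$ keeps every such sequence short exact. Splicing these back together yields exactness of $\Hom_R(F, N)$ at every position.

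For $(2) \Rightarrow (1)$, given a Gorenstein flat module $G$, write $G = Z_{-1}F$ for some complete flat resolution $F$. Apply $\Hom_R(-, N)$ to the short exact sequence $0 \to Z_0 F \to F_0 \to G \to 0$ to get
$$\Hom_R(F_0, N) \xrightarrow{\alpha} \Hom_R(Z_0 F, N) \to \Ext^1_R(G, N) \to \Ext^1_R(F_0, N),$$
and note $\Ext^1_R(F_0, N) = 0$ since $F_0$ is flat and $N$ is cotorsion. It then suffices to show $\alpha$ is surjective, i.e.\ every $\varphi\colon Z_0F \to N$ extends to $F_0 \to N$. The key trick is to precompose $\varphi$ with the canonical surjection $\pi\colon F_1 \twoheadrightarrow Z_0 F$ (which exists because $F$ is exact): the map $\tilde\varphi := \varphi \circ \pi\colon F_1 \to N$ satisfies $\tilde\varphi \circ d_2 = 0$, so it is a cycle in $\Hom_R(F, N)$. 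By the assumed exactness, $\tilde\varphi = \psi \circ d_1$ for some $\psi\colon F_0 \to N$. Because $d_1$ factors as $F_1 \xrightarrow{\pi} Z_0 F \hookrightarrow F_0$ and $\pi$ is epic, the relation $\psi \circ d_1 = \varphi \circ \pi$ forces $\psi|_{Z_0 F} = \varphi$, giving the required extension.

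The main obstacle is the $(2) \Rightarrow (1)$ direction: one must identify exactly which cycle of $\Hom_R(F, N)$ detects the extension problem. Once the observation is made that $\varphi$ lifts along the surjection $\pi\colon F_1 \twoheadrightarrow Z_0F$ to a cycle at position one, the assumed exactness of $\Hom_R(F, N)$ delivers the extension almost immediately, and the cotorsion hypothesis on $N$ handles the remaining $\Ext^1$ term coming from the flat module $F_0$.
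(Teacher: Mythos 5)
Your proof is correct, and it takes a genuinely different route from the paper. The paper's argument passes through the chain complex category: it identifies $\Ext^1_{\ch}(F,S^0(N))$ with the subgroup of degreewise split extensions when $N$ is cotorsion, observes that vanishing of this group for all complete flat resolutions $F$ is equivalent to null-homotopy of all chain maps $F \to S^0(N)$ (equivalently, exactness of $\Hom_R(F,N)$), and then invokes a cited isomorphism $\Ext^1_{\ch}(F,S^0(N)) \cong \Ext^1_R(Z_{-1}F,N)$ valid for exact $F$. Your proof instead works entirely at the level of modules: for $(1)\Rightarrow(2)$ you splice the short exact sequences $0 \to Z_{i+1}F \to F_{i+1} \to Z_i F \to 0$, using that each $Z_iF$ is Gorenstein flat so the $\Ext^1$ terms die; for $(2)\Rightarrow(1)$ you reduce the vanishing of $\Ext^1_R(G,N)$ to an extension problem $\Hom_R(F_0,N) \twoheadrightarrow \Hom_R(Z_0F,N)$ (the $\Ext^1_R(F_0,N)$ obstruction vanishes by cotorsion), and solve it by the lift--cocycle trick: precompose with $F_1 \twoheadrightarrow Z_0F$ to land in $\ker d_2^*$, use exactness of $\Hom_R(F,N)$ to write the result as $\psi d_1$, and cancel the epimorphism to get $\psi|_{Z_0F} = \varphi$. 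The paper's approach is shorter once one has the complex-level $\Ext$ lemma in hand, and it fits the surrounding machinery of the paper; yours is more elementary and self-contained, requiring no knowledge of $\Ext$ in $\ch$ or of the degreewise-split subfunctor, at the mild cost of the slightly fiddly cocycle bookkeeping in $(2)\Rightarrow(1)$.
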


\begin{proof}
 For the proof we let $S^0(N)$ denote the chain complex consisting of $N$ in degree zero and 0 in all other degrees. Note that if $N$ is Gorenstein cotorsion then it is clearly cotorsion, so we will assume throughout the proof that $N$ is cotorsion.
Then for any complete flat resolution $F$, we see that $\Ext^1_{\ch}(F,S^0(N))$ coincides with the subgroup $\Ext^1_{dw}(F,S^0(N))$; this is the Yondeda Ext subgroup consisting of all degreewise split short exact sequences $$0 \xrightarrow{} S^0(N) \xrightarrow{} Z \xrightarrow{} F \xrightarrow{}0.$$
Now this subgroup vanishes for all complete flat resolutions $F$ if and only if any chain map $F \xrightarrow{} S^0(N)$ with $F$ a complete flat resolution is null homotopic; that is, if  $\Hom_R(F,N)$ is exact for all complete flat resolutions $F$. Summarizing, we have that $\Hom_R(F,N)$ is exact for all complete flat resolutions $F$ if an only if $\Ext^1_{\ch}(F,S^0(N)) = 0$ for all complete flat resolutions $F$. But finally, since any such $F$ is an exact complex, the following isomorphism holds by~\cite[Lemma~4.2]{gillespie-degreewise-model-strucs}: $$\Ext^1_{\ch}(F,S^0(N) \cong \Ext^1_R(Z_{-1}F,N).$$
Now the Lemma follows since the class of Gorenstein flat modules coincides with the class of all $Z_nF$ where $F$ is a complete flat resolution.
\end{proof}

\begin{proposition}\label{prop-flat cores equal}
Let $R$ be a right coherent ring. Then the flat cotorsion pair $(\class{F},\class{C})$ and the Gorenstein flat cotorsion pair $(\class{GF},\class{GC})$ have the same core. That is, $\class{GF} \cap \class{GC} = \class{F} \cap \class{C}$.
\end{proposition}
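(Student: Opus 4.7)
The plan is to prove the two inclusions separately. The inclusion $\class{GF} \cap \class{GC} \subseteq \class{F} \cap \class{C}$ is quick: if $N$ lies in $\class{GF} \cap \class{GC}$, then $N$ is cotorsion (as already observed), and writing $N = Z_{-1}F$ for a complete flat resolution $F$ produces the short exact sequence $0 \to N \to F_{-1} \to Z_{-2}F \to 0$ with $Z_{-2}F \in \class{GF}$. Since $N \in \class{GC}$, we have $\Ext^1_R(Z_{-2}F, N) = 0$, so the sequence splits and $N$ becomes a direct summand of the flat module $F_{-1}$, hence flat.

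For the reverse inclusion $\class{F} \cap \class{C} \subseteq \class{GF} \cap \class{GC}$, the paper has already noted that flat implies Gorenstein flat, so the real content is showing that if $L$ is flat cotorsion then $L \in \class{GC}$. By Lemma~\ref{lemma-Gorenstein cotorsion modules}, this reduces to checking that $\Hom_R(F, L)$ is exact for every complete flat resolution $F$. My strategy is to realize $L$ as a direct summand of its double character module $L^{++}$, where $(-)^+ := \Hom_\Z(-, \Q)$, and then to compute $\Hom_R(F, L^{++})$ by adjunction. Since $L$ is flat, $L^+$ is an injective right $R$-module; setting $I := L^+$, the standard adjunction yields an isomorphism of complexes $\Hom_R(F, L^{++}) \cong (I \otimes_R F)^+$, and the right-hand side is exact because $I \otimes_R F$ is exact by definition of complete flat resolution and $(-)^+$ is exact. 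Passing to the retract $\Hom_R(F, L)$ then gives the desired exactness.

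The main obstacle is producing the splitting that realizes $L$ as a retract of $L^{++}$. For this I would use that the evaluation map $L \to L^{++}$ is always a pure monomorphism, and that $L^{++}$ is itself flat (because $L^+$ is injective, hence FP-injective, and $M^+$ is flat whenever $M$ is FP-injective). A routine Tor computation applied to the pure-exact sequence $0 \to L \to L^{++} \to L^{++}/L \to 0$ then shows that $L^{++}/L$ is also flat. Since $L$ is cotorsion, $\Ext^1_R(L^{++}/L, L) = 0$, which forces this sequence to split. This is the step where the cotorsion hypothesis is essential: it is exactly what converts the pure monomorphism $L \hookrightarrow L^{++}$ into a split one, and so it is where the asymmetry between the two inclusions really lives.
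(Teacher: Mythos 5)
Your proof is correct, and the hard inclusion $\class{F} \cap \class{C} \subseteq \class{GF} \cap \class{GC}$ follows the paper's proof essentially verbatim: realize the flat cotorsion module as a retract of its double character dual (flatness of $L^{++}$ via coherence, purity of $L \hookrightarrow L^{++}$, then cotorsion forcing the split), and transfer exactness through the adjunction $\Hom_R(F, L^{++}) \cong (L^+ \otimes_R F)^+$. One small caveat: your parenthetical ``$M^+$ is flat whenever $M$ is FP-injective'' is not true over an arbitrary ring; it requires (right) coherence, which the paper makes explicit by citing Fieldhouse. The hypothesis is in force, so the conclusion is fine, but the statement as written is imprecise.

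Where you genuinely diverge from the paper is the easy inclusion $\class{GF} \cap \class{GC} \subseteq \class{F} \cap \class{C}$. The paper routes this through Lemma~\ref{lemma-Gorenstein cotorsion modules}: it uses the lemma to conclude $\Hom_R(F,N)$ is exact and then extracts a retraction $p \colon F_0 \to N$ by chasing the factorization $d_1 = ie$ through that exactness. You instead bypass the lemma entirely: writing $0 \to N \to F_{-1} \to Z_{-2}F \to 0$ and noting $Z_{-2}F \in \class{GF}$, the condition $N \in \class{GC} = \rightperp{\class{GF}}$ gives $\Ext^1_R(Z_{-2}F, N) = 0$ and splits the sequence on the spot. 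This is the standard observation that an object in the core of a cotorsion pair $(\class{A},\class{B})$ splits off any monomorphism whose cokernel lies in $\class{A}$; it is more elementary, as it appeals only to the cotorsion-pair formalism and not to the $\Ext^1_{\ch}(F, S^0(N)) \cong \Ext^1_R(Z_{-1}F, N)$ identification underlying the lemma. The paper's presentation buys uniformity (Lemma~\ref{lemma-Gorenstein cotorsion modules} then carries both directions), while yours is leaner on the easy side; since the lemma is indispensable for $\supseteq$ anyway, neither route saves a dependency overall.
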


\begin{proof}
($\subseteq$) Say $N \in \class{GF} \cap \class{GC}$. Since $\class{GC} \subseteq \class{C}$ we only need to argue that $N$ is flat. We start by writing $N = Z_0F$ where $F$ is some complete flat resolution, and we factor the differential $d_1 : F_1 \xrightarrow{} F_0$ as $F_1 \xrightarrow{e} N \xrightarrow{i} F_0$. So $d_1  = ie$ where $e$ is an epimorphism and $i$ is a monomorphism. By Lemma~\ref{lemma-Gorenstein cotorsion modules}  we know that $\Hom_R(F,N)$ is exact. But since $e \in \ker{d^*_2}$, the exactness of $\Hom_R(F,N)$ gives us $e \in \ker{d^*_2} = \im{d^*_1}$. This means we have a map $p \in \Hom_R(F_0,N)$ such that $pd_1 = e$. So $pie = e$. So $pi = 1_N$. So $N$ is a retract of the flat $F_0$, and so $N$ too is flat.

Before proving the reverse inclusion ($\supseteq$), we recall that given a left (resp. right) $R$-module $M$, its {character module} is defined to be the right (resp. left) $R$-module $M^+ = \Hom_{\Z}(M,\Q)$. It is a standard fact that $M$ is flat if and only if $M^+$ is injective. A proof can be found in~\cite{enochs-jenda-book}, for example.

($\supseteq$) Now suppose $N$ is flat cotorsion. This time we see $N$ is clearly Gorenstein flat, so the point is to show that $N$ must be Gorenstein cotorsion too. We again use Lemma~\ref{lemma-Gorenstein cotorsion modules} above. That is, let $F$ be an arbitrary complete flat resolution. We will be done once we show $\Hom_R(F,N)$ is also exact. First, note that the double character dual $N^{++}$ does have the property that
$\Hom_R(F,N^{++})$ is exact. Indeed, $\Hom_R(F,N^{++}) = \Hom_R(F,\Hom_{\Z}(N^+,\Q)) \cong \Hom_{\Z}(N^+ \tensor_R F,\Q))$, and since $N$ is flat we know $N^+$ is an injective (right) $R$-module. So this last complex must be exact just by assuming that $N$ is flat.

Now that we have shown $\Hom_R(F,N^{++})$ must be exact whenever $N$ is flat, it is left to argue that $\Hom_R(F,N)$ must also be exact when $N$ is flat cotorsion. We will argue that $\Hom_R(F,N)$ is in fact a retract of $\Hom_R(F,N^{++})$; since exact complexes are closed under retracts this will complete the proof. Now $N$ flat implies $N^+$ is injective. In particular, $N^+$ is absolutely pure and since $R$ is (right) coherent we conclude from~\cite[Theorem~2.2]{fieldhouse} that $N^{++}$ is also flat.
 Then from the proof of~\cite[Proposition~5.3.9]{enochs-jenda-book} we see that there is a pure exact sequence $$0 \xrightarrow{} N \xrightarrow{} N^{++} \xrightarrow{} N^{++}/N \xrightarrow{} 0.$$ Since the class of flat modules is closed under pure quotients, we get that $N^{++}/N$ is also flat. But by the assumption that $N$ is also cotorsion, this means that the above sequence splits. Thus $N$ is a retract of $N^{++}$. Like all functors, $\Hom_R(F,-)$ must preserve retracts. So we conclude that $\Hom_R(F,N)$ must be an exact complex.
\end{proof}

\begin{theorem}\label{them-main}
Let $R$ be a right coherent ring. Then the category $R$-Mod of left $R$-modules has an abelian model structure, the \textbf{Gorenstein flat model structure}, as follows:
\begin{itemize}
\item The cofibrant objects coincide with the class $\class{GF}$ of Gorenstein flat modules.
\item The fibrant objects coincide with the class $\class{C}$ of cotorsion modules.
\item The trivially cofibrant objects coincide with the class $\class{F}$ of flat modules.
\item The trivially fibrant objects coincide with the class $\class{GC}$ of Gorenstein cotorsion modules.
\end{itemize}
An $R$-module $M$ fits into a short exact sequence $$0 \xrightarrow{} C \xrightarrow{} F \xrightarrow{} M \xrightarrow{} 0$$ with $F \in \class{F}$ and $C \in \class{GC}$ if and only if it fits into a short exact sequence $$0 \xrightarrow{} M \xrightarrow{} C' \xrightarrow{} F' \xrightarrow{} 0$$ with $F' \in \class{F}$ and $C' \in \class{GC}$. Modules $M$ with this property are precisely the trivial objects of the Gorenstein flat model structure.
\end{theorem}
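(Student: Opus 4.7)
The plan is to apply Theorem~\ref{them-how to construct hovey triples} directly, taking the two input cotorsion pairs to be the Gorenstein flat cotorsion pair $(\class{GF},\class{GC})$ in the role of $(\class{Q},\tilclass{R})$ and the flat cotorsion pair $(\class{F},\class{C})$ in the role of $(\tilclass{Q},\class{R})$. Both are complete hereditary cotorsion pairs on $R$-Mod: the flat cotorsion pair by the classical result of Enochs recalled in Section~\ref{subsec-G-flat}, and the Gorenstein flat cotorsion pair by Theorem~\ref{them-G-flat covers} together with Holm's projective resolvingness of $\class{GF}$ (over a right coherent $R$), which gives hereditariness.

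Next I would verify the two hypotheses (1) and (2) of Theorem~\ref{them-how to construct hovey triples}. Condition (1) requires $\tilclass{R} \subseteq \class{R}$ and $\tilclass{Q} \subseteq \class{Q}$; in our identification this reads $\class{GC} \subseteq \class{C}$ and $\class{F} \subseteq \class{GF}$, both of which were already noted in Section~\ref{subsec-G-flat} (every flat module is Gorenstein flat, hence by taking right perps every Gorenstein cotorsion module is cotorsion). Condition (2), the equality of cores $\tilclass{Q} \cap \class{R} = \class{Q} \cap \tilclass{R}$, becomes $\class{F} \cap \class{C} = \class{GF} \cap \class{GC}$, which is precisely the content of Proposition~\ref{prop-flat cores equal} and is the substantive input.

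With these hypotheses in hand, Theorem~\ref{them-how to construct hovey triples} produces an abelian model structure $(\class{Q},\class{W},\class{R}) = (\class{GF},\class{W},\class{C})$ whose associated cotorsion pairs are $(\tilclass{Q},\class{R}) = (\class{F},\class{C})$ and $(\class{Q},\tilclass{R}) = (\class{GF},\class{GC})$. Reading off the classes gives the listed descriptions of cofibrant, fibrant, trivially cofibrant, and trivially fibrant objects. Hereditariness of the resulting Hovey triple is immediate from hereditariness of the two input cotorsion pairs.

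Finally, the characterization of trivial objects is just a transcription of the two equivalent descriptions of $\class{W}$ supplied by Theorem~\ref{them-how to construct hovey triples}: a module $M$ lies in $\class{W}$ iff it admits a short exact sequence $M \rightarrowtail C' \twoheadrightarrow F'$ with $C' \in \tilclass{R} = \class{GC}$ and $F' \in \tilclass{Q} = \class{F}$, iff it admits $C \rightarrowtail F \twoheadrightarrow M$ with $C \in \class{GC}$ and $F \in \class{F}$. No further work is needed beyond invoking the theorem. The whole argument is formal once Proposition~\ref{prop-flat cores equal} is available, which is why the main obstacle was really the core equality, not this theorem itself.
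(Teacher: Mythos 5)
Your proposal is correct and follows exactly the paper's own argument: identify $(\class{GF},\class{GC})$ and $(\class{F},\class{C})$ with the two input cotorsion pairs of Theorem~\ref{them-how to construct hovey triples}, check the two containments, invoke Proposition~\ref{prop-flat cores equal} for the equality of cores, and read off the classes and the two descriptions of $\class{W}$. Nothing to add.
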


\begin{proof}
As noted in Section~\ref{sec-prelims} we already know that $(\class{F},\class{C})$ is a complete hereditary cotorsion pair. Since $R$ is right coherent we also have from Theorem~\ref{them-G-flat covers} that $(\class{GF},\class{GC})$ is a complete hereditary cotorsion pair. Proposition~\ref{prop-flat cores equal} says that the cores of these cotorsion pairs are equal. Clearly $\class{F} \subseteq \class{GF}$, and this is equivalent to $\class{GC} \subseteq \class{C}$. So Theorem~\ref{them-how to construct hovey triples} produces a model structure exactly as described.
\end{proof}

Recall that a \textbf{Frobenius category} is an exact category with enough injectives and projectives and in which the projective and injective objects coincide. Given a Frobenius category $\class{A}$, we can form the stable category $\class{A}/\sim$, where $f \sim g$ iff $g-f$ factors through a projective-injective. The main fact about Frobenius categories is that the stable category has the structure of a triangulated category. See~\cite{happel-triangulated}. On the other hand, the homotopy category of an abelian model structure is naturally a pretriangulated category as well~\cite[Chapters~6 and~7]{hovey-model}, and equivalent to the stable category of a Frobenius category whenever it is hereditary by~\cite{gillespie-exact model structures}. In particular, we get the following corollary.

\begin{corollary}\label{cor-Frobenius}
Let $R$ be a right coherent ring. Then the full subcategory $\class{GF} \cap \class{C}$ of $R$-Mod consisting of the Gorenstein flat and cotorsion modules is a Frobenius category with respect to its inherited exact structure. The projective-injective objects are precisely the flat cotorsion modules. Moreover, the homotopy category of the Gorenstein flat model structure is triangle equivalent to the stable category $$(\class{GF} \cap \class{C})/\sim$$ where $f \sim g$ if and only if $g -f$ factors through a flat cotorsion module. 
\end{corollary}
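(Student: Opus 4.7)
The plan is to package Theorem~\ref{them-main} with the general correspondence between hereditary abelian model structures and Frobenius categories established in~\cite{gillespie-exact model structures}, to which the paragraph preceding the corollary already alludes. First I would read off from Theorem~\ref{them-main} that the cofibrant class is $\class{Q} = \class{GF}$ and the fibrant class is $\class{R} = \class{C}$, so that the full subcategory of cofibrant-fibrant objects is precisely $\class{GF} \cap \class{C}$. Next, I would identify the core of the model structure: by the Hovey triple formalism of Section~\ref{sec-prelims}, the core $\class{Q} \cap \class{W} \cap \class{R}$ coincides with both $\tilclass{Q} \cap \class{R} = \class{F} \cap \class{C}$ and $\class{Q} \cap \tilclass{R} = \class{GF} \cap \class{GC}$, and these two descriptions agree by Proposition~\ref{prop-flat cores equal}. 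So the core is unambiguously the class $\class{F} \cap \class{C}$ of flat cotorsion modules.

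With these two identifications in hand, the Frobenius structure comes for free. The general result from~\cite{gillespie-exact model structures} tells us that in any hereditary abelian model structure, the full subcategory of cofibrant-fibrant objects forms a Frobenius category with respect to the inherited exact structure (i.e., short exact sequences all three of whose terms are cofibrant-fibrant), whose class of projective-injective objects is exactly the core of the model structure. Applied in our setting, this immediately gives that $\class{GF} \cap \class{C}$ is a Frobenius category whose projective-injectives are the flat cotorsion modules.

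For the final triangle equivalence, I would invoke the other half of the same result: the homotopy category of the hereditary abelian model structure is triangle equivalent to the stable category of this Frobenius category, with the stable homotopy relation $f \sim g$ meaning that $g - f$ factors through a projective-injective, i.e., in our case through a flat cotorsion module. The hard work of verifying exactness conditions, the two-out-of-three property, and constructing triangulations is entirely subsumed by the cited general theorem, so there is no genuine obstacle; the only input specific to this paper is the identification of the core with $\class{F} \cap \class{C}$, which is Proposition~\ref{prop-flat cores equal}. The corollary is thus essentially a repackaging, and the proof can be kept to a few lines citing Theorem~\ref{them-main}, Proposition~\ref{prop-flat cores equal}, and~\cite{gillespie-exact model structures}.
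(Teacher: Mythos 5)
Your proposal is correct and matches the paper's proof: both identify the cofibrant-fibrant objects as $\class{GF}\cap\class{C}$, identify the core as $\class{F}\cap\class{C}$ via Proposition~\ref{prop-flat cores equal}, and then cite the general result of~\cite{gillespie-exact model structures} to obtain the Frobenius structure and the triangle equivalence. There is no substantive difference in approach.
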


\begin{proof}
By the inherited exact structure we mean that the short exact sequences (or conflations) are the usual short exact sequences but with all three terms in $\class{GF} \cap \class{C}$.  Since we have a hereditary Hovey triple $(\class{GF},\class{W},\class{C})$ whose core $\class{GF} \cap \class{W} \cap \class{C}$ equals the class of flat cotorsion modules, the result follows from~\cite[Sections~4 and~5]{gillespie-exact model structures}. See also~\cite[Section~4.2]{gillespie-hereditary-abelian-models}.
\end{proof}

We end by pointing out that the homotopy category of the Gorenstein flat model structure is equivalent to the usual stable module category Stmod($R$) in the case that  $R$ is a Ding-Chen ring in the sense of~\cite{gillespie-ding}. These are the coherent analogs of Gorenstein rings. In particular, the class of Ding-Chen rings includes Gorenstein rings and hence quasi-Frobenius rings.

\begin{corollary}\label{corollary}
Let $R$ be a Ding-Chen ring. That is, a (left and right) coherent ring which has finite FP-injective dimension as both a left and right module over itself. Then a module is trivial in the Gorenstein flat model structure if and only if it has finite flat dimension, or equivalently, if and only if it has finite FP-injective dimension. In the case that $R$ is Gorenstein this is equivalent to the module having finite injective dimension and also equivalent to it having finite projective dimension.
\end{corollary}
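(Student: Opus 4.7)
The plan is to identify the trivial class $\class{W}$ of the Gorenstein flat model structure with the class $\class{FF}$ of modules of finite flat dimension whenever $R$ is a Ding-Chen ring. The equivalence of $\class{FF}$ with the class of modules of finite FP-injective dimension is a cornerstone theorem for Ding-Chen rings which I would simply cite from \cite{gillespie-ding}; the additional Gorenstein-ring equivalences with finite projective and finite injective dimension are classical and can be found in \cite{enochs-jenda-book}.

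For the inclusion $\class{FF} \subseteq \class{W}$ I would induct on flat dimension. Flat modules are trivially cofibrant in the Gorenstein flat model structure and hence lie in $\class{W}$. For a module $M$ of flat dimension $n \geq 1$, choosing a short exact sequence $0 \to K \to F \to M \to 0$ with $F$ flat and $K$ of flat dimension $n-1$, the inductive hypothesis places $K$ in $\class{W}$, and thickness of $\class{W}$ then forces $M \in \class{W}$.

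The reverse inclusion $\class{W} \subseteq \class{FF}$ is the main content. By the explicit characterization in Theorem~\ref{them-main}, any $M \in \class{W}$ sits in a short exact sequence $0 \to C \to F \to M \to 0$ with $F$ flat and $C$ Gorenstein cotorsion; since $F$ is flat, the Tor long exact sequence gives flat dim$\,M \leq $ flat dim$\,C + 1$, so the task reduces to showing that every Gorenstein cotorsion module has finite flat dimension when $R$ is Ding-Chen. This reduction is the main obstacle. I would attack it by iterating the ``enough projectives'' half of the cotorsion pair $(\class{GF},\class{GC})$ to build a left resolution $\cdots \to G_1 \to G_0 \to N \to 0$ whose terms $G_i$ lie in $\class{GF}$ and whose syzygies $N_i$ lie in $\class{GC}$. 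Extension-closure of $\class{GC}$ then forces each $G_i$ into $\class{GF} \cap \class{GC}$, which equals the flat cotorsion class by Proposition~\ref{prop-flat cores equal}. Invoking the characteristic Ding-Chen property that every $R$-module has Gorenstein flat dimension at most $d$, the FP-injective dimension of $R$, the $d$-th syzygy $N_d$ is simultaneously Gorenstein flat and Gorenstein cotorsion, hence flat by Proposition~\ref{prop-flat cores equal} once more. Thus $N$ admits a flat resolution of length $d$ and has flat dimension at most $d$, completing the reduction.
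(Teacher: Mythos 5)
Your argument is correct, but it takes a genuinely different and more hands-on route than the paper. The paper's proof is a two-line application of the uniqueness clause in Theorem~\ref{them-how to construct hovey triples}: it cites \cite[Theorem~4.10]{gillespie-ding}, which already exhibits a Hovey triple $(\class{GF},\class{V},\class{C})$ with $\class{V}$ the modules of finite flat (equivalently, finite FP-injective) dimension, and then observes that since the cofibrant and fibrant classes agree with those of Theorem~\ref{them-main}, the thick class is forced to be the same, i.e.\ $\class{V}=\class{W}$. You instead prove the identification $\class{W}=\class{FF}$ directly. For $\class{FF}\subseteq\class{W}$ you use thickness of $\class{W}$ and the fact that $\class{F}=\tilclass{Q}\subseteq\class{W}$, inducting on flat dimension, which is fine. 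For $\class{W}\subseteq\class{FF}$ you use the explicit description of $\class{W}$ from Theorem~\ref{them-main} to reduce to showing Gorenstein cotorsion modules have finite flat dimension, then iterate the ``enough projectives'' side of $(\class{GF},\class{GC})$ and use extension-closure of $\class{GC}$ to force each Gorenstein flat term of the resulting resolution into the core $\class{GF}\cap\class{GC}=\class{F}\cap\class{C}$, obtaining a flat resolution whose $d$-th syzygy is in $\class{GC}$; the Ding--Chen bound $\text{Gfd}(M)\le d=\text{FP-id}(R)$ then makes that syzygy Gorenstein flat as well, hence flat by Proposition~\ref{prop-flat cores equal}. This is a legitimate argument, but be aware it is essentially reproving part of \cite[Theorem~4.10]{gillespie-ding} rather than citing it, and it silently invokes two nontrivial external ingredients: the Ding--Chen bound on Gorenstein flat dimension (a result of Ding, Mao and coauthors) and the dimension-shifting lemma for Gorenstein flat resolutions, which rests on Holm's theorem that $\class{GF}$ is projectively resolving and closed under summands over a right coherent ring. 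The uniqueness-of-$\class{W}$ shortcut is precisely what lets the paper avoid reassembling these pieces; it is worth internalizing, since it converts ``identify the trivial class'' problems into ``match cofibrant and fibrant classes'' problems, which are typically much easier.
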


\begin{proof}
Apply~\cite[Theorem~4.10]{gillespie-ding}. This theorem says that when $R$ is a Ding-Chen ring, we have another Hovey triple $(\class{GF},\class{V},\class{C})$ where the class $\class{V}$ of trivial objects contains exactly the modules described in the corollary. It follows from the uniqueness property of Theorem~\ref{them-how to construct hovey triples} that the class $\class{V}$ of trivial objects must coincide with the class $\class{W}$ from Theorem~\ref{them-main}.
\end{proof}

Two other model structures for generalizing the stable module category of a ring appear in~\cite{bravo-gillespie-hovey}. In the case that $R$ is (right) coherent the first has as its cofibrant objects the \emph{Ding projective} modules from~\cite{gillespie-ding}. The second has as its fibrant objects the \emph{Ding injective} modules from~\cite{gillespie-ding}. When $R$ is Gorenstein, or even Ding-Chen, these two model structures also each have the same trivial objects as in Corollary~\ref{corollary}. Hence all three models produce equivalent homotopy categories in this case. But for a general (right) coherent ring, things are less clear. In particular, we don't know precisely when the Ding projective model and the Ding injective model capture equivalent homotopy categories. However, it is shown in~\cite[Section~5, Prop.~5.2]{estrada-gillespie-coherent schemes}, that for any (right) coherent ring $R$, the trivial objects of the Gorenstein flat model structure coincide with the trivial objects of the Ding projective model structure. The homotopy category of the latter was called the \emph{projective stable module category of $R$} in~\cite{bravo-gillespie-hovey}. Thus we can think of the model structure constructed in this paper as a ``flat model'' for this homotopy category. The paper~\cite{estrada-gillespie-coherent schemes} continues in this direction, considering the problem of extending the flat model structure to coherent schemes.

% \bibliography{hovey}
% \bibliographystyle{amsalpha}

\providecommand{\bysame}{\leavevmode\hbox to3em{\hrulefill}\thinspace}
\providecommand{\MR}{\relax\ifhmode\unskip\space\fi MR }
% \MRhref is called by the amsart/book/proc definition of \MR.
\providecommand{\MRhref}[2]{%
  \href{http://www.ams.org/mathscinet-getitem?mr=#1}{#2}
}
\providecommand{\href}[2]{#2}

\end{document}